\numberwithin{equation}{section}
\newtheorem{theorem}[equation]{Theorem}
\newtheorem*{theorem*}{Theorem}
\newtheorem{lemma}[equation]{Lemma}
\newtheorem*{conjecture*}{Mamma Conjecture}
\newtheorem*{conjecture1*}{Mamma Conjecture (revisited)}
\newtheorem{proposition}[equation]{Proposition}
\newtheorem{corollary}[equation]{Corollary}
\newtheorem*{corollary*}{Corollary}
\theoremstyle{remark}
\theoremstyle{remark}
\newtheorem{remark}[equation]{Remark}
\newcommand{\cA}{{\mathcal A}}
\newcommand{\cB}{{\mathcal B}}
\newcommand{\cC}{{\mathcal C}}
\newcommand{\cD}{{\mathcal D}}
\newcommand{\cM}{{\mathcal M}}
\newcommand{\cO}{{\mathcal O}}
\newcommand{\cP}{{\mathcal P}}
\newcommand{\cT}{{\mathcal T}}
\newcommand{\cU}{{\mathcal U}}
\newcommand{\cX}{{\mathcal X}}
\newcommand{\bbC}{\mathbb{C}}
\newcommand{\bbP}{\mathbb{P}}
\newcommand{\bbQ}{\mathbb{Q}}
\newcommand{\bbZ}{\mathbb{Z}}
\DeclareMathOperator{\id}{id}
\DeclareMathOperator{\Chow}{Chow} % category of Chow motives
\DeclareMathOperator{\DMChow}{DMChow} % category of Chow motives
\DeclareMathOperator{\Spec}{Spec} %Balmer's spectrum
\newcommand{\KM}{\mathsf{KM}}
\newcommand{\Mot}{\mathsf{Mot}}
\newcommand{\dgcat}{\mathsf{dgcat}}
\newcommand{\perf}{\mathsf{perf}}
\newcommand{\dg}{\mathsf{dg}}
\newcommand{\Hom}{\mathrm{Hom}}
\newcommand{\dgHo}{\mathsf{H}^0}
\newcommand{\op}{\mathsf{op}}
\newcommand{\too}{\longrightarrow}
\newcommand{\ie}{\textsl{i.e.}\ }
\begin{document}

\title[Exceptional collections and motivic decompositions via NC motives]{From exceptional collections \\ to motivic decompositions\\via noncommutative motives}
\author{Matilde Marcolli and Gon{\c c}alo~Tabuada}

\address{Matilde Marcolli, Mathematics Department, Mail Code 253-37, Caltech, 1200 E.~California Blvd. Pasadena, CA 91125, USA}
\email{matilde@caltech.edu} 
\urladdr{http://www.its.caltech.edu/~matilde}

\address{Gon{\c c}alo Tabuada, Department of Mathematics, MIT, Cambridge, MA 02139, USA and
Departamento de Matem{\'a}tica e CMA, FCT-UNL, Quinta da Torre, 2829-516 Caparica, Portugal}
\email{tabuada@math.mit.edu}
\urladdr{http://math.mit.edu/~tabuada}

\subjclass[2000]{13D09, 14A22, 14C15}
\date{\today}

\keywords{Noncommutative algebraic geometry, Deligne-Mumford stacks, exceptional collections, semi-orthogonal decompositions, Chow motives, noncommutative motives}

\abstract{Making use of noncommutative motives we relate exceptional collections (and more generally semi-orthogonal decompositions) to  motivic decompositions. On one hand we prove that the Chow motive $M(\cX)_\bbQ$ of every smooth and proper Deligne-Mumford stack $\cX$, whose bounded derived category $\cD^b(\cX)$ of coherent schemes admits a full exceptional collection, decomposes into a direct sum of tensor powers of the Lefschetz motive. Examples include projective spaces, quadrics, toric varieties, homogeneous spaces, Fano threefolds, and moduli spaces. On the other hand we prove that if $M(\cX)_\bbQ$ decomposes into a direct direct sum of tensor powers of the Lefschetz motive and moreover $\cD^b(\cX)$ admits a semi-orthogonal decomposition, then the noncommutative motive of each one of the pieces of the semi-orthogonal decomposition is a direct sum of $\otimes$-units. As an application we obtain a simplification of Dubrovin's conjecture.}
}

\maketitle
\vskip-\baselineskip
\vskip-\baselineskip
%\vskip-\baselineskip
%\tableofcontents

\bigskip

{\em Dedicated to Yuri Manin, on the occasion of his $75^{\mathrm{th}}$ birthday.}

%\medskip

%-------------------------------------------------------------------------------
\section*{Introduction}
%-------------------------------------------------------------------------------

Let $\cX$ be a smooth and proper Deligne-Mumford (=DM) stack~\cite{DM}. In order to study it we can proceed in two distinct directions. On one direction we can associate to $\cX$ its different Weil cohomologies $H^\ast(\cX)$ (Betti, de Rham, $l$-adic, and others; see \cite[\S8]{BM}) or more intrinsically its Chow motive $M(\cX)_\bbQ$ (with rational coefficients); see \S\ref{sec:statements}. On another direction we can associate to $\cX$ its bounded derived category $\cD^b(\cX):=\cD^b(\mathrm{Coh}(\cX))$ of coherent sheaves; see \cite{Rouquier}.

In several cases of interest (projective spaces, quadrics, toric varieties, homogeneous spaces, Fano threefolds, moduli spaces, and others; see \S\ref{sec:examples}) the derived category $\cD^b(\cX)$ admits a ``weak decomposition'' into simple pieces. The precise formulation of this notion goes under the name of a {\em full exceptional collection}; consult \cite[\S1.4]{Huy} for details. This motivates the following general questions:

\smallbreak

{\bf Question A:} {\em What can it be said about the Chow motive $M(\cX)_\bbQ$ of a smooth and proper DM stack $\cX$ whose bounded derived category $\cD^b(\cX)$ admits a full exceptional collection ? Does $M(\cX)_\bbQ$ also decomposes into simple pieces ?}

\smallbreak

{\bf Question B:} {\em Conversely, what can it be said about the bounded derived category $\cD^b(\cX)$ of a smooth and proper DM stack $\cX$ whose Chow motive $M(\cX)_\bbQ$ decomposes into simple pieces ? }

\smallbreak

In this article, making use of the theory of noncommutative motives, we provide a precise and complete answer to Question A and a partial answer to Question B.

%-------------------------------------------------------------------------------
\section{Statement of results}\label{sec:statements}
%-------------------------------------------------------------------------------
Throughout the article we will work over a perfect base field $k$. Let us denote by $\cD\cM(k)$ the category of smooth and proper DM stacks (over $\Spec(k)$) and by $\cP(k)$ its full subcategory of smooth projective varieties. Recall from \cite[\S8]{BM} \cite[\S4.1.3]{Andre} the construction of the (contravariant) functors
\begin{eqnarray*}
h(-)_\bbQ: \cD\cM(k)^\op \too \DMChow(k)_\bbQ && M(-)_\bbQ: \cP(k)^\op \too \Chow(k)_\bbQ
\end{eqnarray*}
with values in the categories of Deligne-Mumford-Chow motives and Chow motives, respectively. There is a natural commutative diagram
\begin{equation}\label{eq:diagram-one}
\xymatrix{
\cD\cM(k)^\op \ar[d]_-{h(-)_\bbQ}  & *+<2.5ex>{\cP(k)^\op} \ar@{_{(}->}[l] \ar[d]^-{M(-)_\bbQ} \\
\DMChow(k)_\bbQ & \Chow(k)_\bbQ \ar[l]^-\sim
}
\end{equation}
and as shown in \cite[Thm.~2.1]{Toen} the lower horizontal functor is a $\bbQ$-linear equivalence. By inverting it we obtain then a well-defined functor
\begin{eqnarray}\label{eq:func-final}
\cD\cM(k)^\op \too \Chow(k)_\bbQ && \cX \mapsto M(\cX)_\bbQ\,.
\end{eqnarray} 
Our first main result, which provides an answer to Question A, is the following:
\begin{theorem}\label{thm:main}
Let $\cX \in \cD\cM(k)$. Assume that $\cD^b(\cX)$ admits a full exceptional collection $(E_1, \ldots, E_m)$ of length $m$. Then, there is a choice of integers (up to permutation) $l_1, \ldots, l_m \in \{0, \ldots, \mathrm{dim}(\cX)\}$ giving rise to a canonical isomorphism
\begin{equation}\label{eq:decomposition} 
M(\cX)_\bbQ \simeq {\bf L}^{\otimes l_1} \oplus \cdots \oplus {\bf L}^{\otimes l_m}\,,
\end{equation}
where ${\bf L} \in \Chow(k)_\bbQ$ denotes the Lefschetz motive and ${\bf L}^{\otimes l}, l \geq 0$, its tensor powers (with the convention ${\bf L}^{\otimes 0}= M(\Spec(k))_\bbQ$); see \cite[\S4.1.5]{Andre}.
\end{theorem}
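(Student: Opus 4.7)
The plan is to convert the decomposition of $\cD^b(\cX)$ into the decomposition of $M(\cX)_\bbQ$ by passing through noncommutative motives. First, a full exceptional collection $(E_1,\ldots,E_m)$ in $\cD^b(\cX)$ is equivalent to a semi-orthogonal decomposition $\cD^b(\cX)=\langle\cD^b(k),\ldots,\cD^b(k)\rangle$ with $m$ copies; this lifts canonically to a semi-orthogonal decomposition of the natural dg enhancement $\perfdg(\cX)$. Since the universal additive invariant $U\colon\dgcat\to\Madd$ sends semi-orthogonal decompositions to direct sums, after rationalization we obtain
\[
U(\perfdg(\cX))_\bbQ \;\simeq\; \bigoplus_{i=1}^{m} U(\perfdg(k))_\bbQ,
\]
i.e., the noncommutative motive of $\cX$ becomes the direct sum of $m$ copies of the $\otimes$-unit.

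Next I would invoke the Kontsevich--Tabuada bridge between commutative and noncommutative motives, namely the $\bbQ$-linear symmetric monoidal functor $R\colon \Chow(k)_\bbQ \to \NChow(k)_\bbQ$ that sends the Lefschetz motive ${\bf L}$ (and hence every ${\bf L}^{\otimes l}$, $l\ge 0$) to the $\otimes$-unit, and that becomes fully faithful after passing to the orbit category $\Chow(k)_\bbQ/(-\otimes\bbQ(1))$. Applying $R$ to $M(\cX)_\bbQ$ and combining with the previous step yields $R(M(\cX)_\bbQ)\simeq\bigoplus_{i=1}^{m}\mathbf{1}$, so at the level of the orbit category $M(\cX)_\bbQ$ already agrees with any candidate $\bigoplus_i{\bf L}^{\otimes l_i}$. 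What remains is to lift this coincidence to an honest isomorphism in $\Chow(k)_\bbQ$ and to pin down the integers~$l_i$.

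To calibrate and lift the decomposition I would combine two inputs. On one hand, additivity of Hochschild homology along the semi-orthogonal decomposition, together with the HKR-type isomorphism $\HH_n(\cX)\simeq\bigoplus_p H^{p-n}(\cX,\Omega^p)$, forces the de Rham cohomology of $\cX$ to be concentrated in even degrees and of pure Hodge type $(l,l)$, and supplies the multiplicities $h^{l,l}(\cX)=\#\{i:l_i=l\}$, with each $l_i\in\{0,\ldots,\dim\cX\}$. On the other hand, this Tateness places $M(\cX)_\bbQ$ in the thick Karoubian subcategory of $\Chow(k)_\bbQ$ generated by the ${\bf L}^{\otimes l}$, on which, once the weight data stripped away by $R$ is reintroduced, the noncommutative motive determines the Chow motive; the isomorphism \eqref{eq:decomposition} then lifts canonically. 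I expect the main obstacle to be precisely this last step: because $R$ identifies all Tate twists, one must recover the weight grading independently---via the Hodge or $\ell$-adic realization of $M(\cX)_\bbQ$, or by an explicit Chow-theoretic argument using the classes of the $E_i$'s---in order to promote the noncommutative decomposition to a Chow motivic one with the correct shifts.
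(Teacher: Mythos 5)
Your first two steps match the paper exactly: Lemma~\ref{lem:key} is precisely the ``semi-orthogonal decompositions go to direct sums under $\cU_a$'' step, and the bridge $R\colon\Chow(k)_\bbQ/_{\!-\otimes\bbQ(1)}\to\Hmo_0(k)_\bbQ$ from \cite{CvsNC} is indeed what is used, giving $\pi(M(\cX)_\bbQ)\simeq\bigoplus_{j=1}^m\pi(M(\Spec(k))_\bbQ)$ in the orbit category. You also correctly identify where the real work lies: lifting an isomorphism in the orbit category (which forgets Tate twists) back to an honest isomorphism in $\Chow(k)_\bbQ$.

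However, your proposed resolution of the lifting step has a genuine gap, on two counts. First, the HKR-type input you invoke is a Hodge-theoretic/characteristic-zero argument, while the theorem is stated over an arbitrary perfect base field $k$; you would at best obtain a proof over $\bbC$ (this is essentially the alternative proof of Bernardara--Bolognesi that the authors mention in their remark). Second, and more seriously, even granting the Hodge-Tate property of the realization, the inference ``Tateness places $M(\cX)_\bbQ$ in the thick subcategory generated by the ${\bf L}^{\otimes l}$'' is not automatic---deducing a Chow-motivic decomposition from the shape of a cohomological realization is a Hodge-conjecture-flavoured statement, not a formal consequence. In other words, knowing the weights of the realization does not by itself ``reintroduce the weight data stripped away by $R$.''

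The paper's lifting argument is purely Chow-theoretic and works differently. From the orbit-category isomorphism one extracts two families of morphisms $\underline f=\{f_r\}$ and $\underline g=\{g_s\}$ with $\underline g\circ\underline f=\id$. The key observation is that the components $f_r$ and $g_s$ live in groups of the form $A^{\dim(\cX)+r}(\cX\times\Spec(k))$ and $A^s(\Spec(k)\times\cX)$ (rational Chow groups of DM stacks \`a la Vistoli), which vanish outside $0\le l\le\dim(\cX)$. Reassembling the surviving components gives genuine morphisms $\Phi\colon M(\cX)_\bbQ\to\bigoplus_{l,j}{\bf L}^{\otimes l}$ and $\Psi$ in $\Chow(k)_\bbQ$ with $\Psi\circ\Phi=\id_{M(\cX)_\bbQ}$ (this is the $0^{\mathrm{th}}$-component of $\underline g\circ\underline f$), exhibiting $M(\cX)_\bbQ$ as a direct summand of $\bigoplus_{l=0}^{\dim\cX}\bigoplus_{j=1}^m{\bf L}^{\otimes l}$. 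Since $\Hom_{\Chow(k)_\bbQ}({\bf L}^{\otimes p},{\bf L}^{\otimes q})=\delta_{pq}\cdot\bbQ$, such a summand is necessarily a subsum indexed by some subset $S$, and counting in the orbit category (where all $\pi({\bf L}^{\otimes l})$ are isomorphic) forces $|S|=m$. This gives the decomposition with the correct multiplicities without appealing to any realization, which is exactly what your sketch left open.
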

Intuitively speaking, Theorem~\ref{thm:main} shows that the existence of a full exceptional collection on $\cD^b(\cX)$ ``quasi-determines'' the Chow motive $M(\cX)_\bbQ$. The indeterminacy is only on the number of tensor powers of the Lefschetz motive. Note that this indeterminacy {\em cannot} be refined. For instance, the bounded derived categories of $\Spec(k) \amalg \Spec(k)$ and $\bbP^1$ admit full exceptional collections of length $2$ but the corresponding Chow motives are distinct
$$ M(\Spec(k))_\bbQ^{\oplus 2} = M(\Spec(k) \amalg \Spec(k))_\bbQ \neq M(\bbP^1)_\bbQ = M(\Spec(k))_\bbQ \oplus {\bf L}\,.$$
Hence, Theorem~\ref{thm:main} furnish us the maximum amount of data, concerning the Chow motive, that can be extracted from the existence of a full exceptional collection. 
\begin{corollary}\label{cor:main}
Let $\cX$ be a smooth and proper DM stack satisfying the conditions of Theorem~\ref{thm:main}. Then, for every Weil cohomology $H^\ast(-): \cD\cM(k)^\op \to \mathrm{VecGr}_K$ (with $K$ a field of characteristic zero) we have $H^n(\cX)=0$ for $n$ odd and $\mathrm{dim}H^n(\cX) \leq m$ for $n$ even.
\end{corollary}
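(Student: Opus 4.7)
The plan is to simply apply the given Weil cohomology $H^\ast(-)$ to the motivic decomposition \eqref{eq:decomposition} furnished by Theorem~\ref{thm:main}.

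First I would recall that, by definition, every Weil cohomology $H^\ast(-): \cD\cM(k)^\op \to \mathrm{VecGr}_K$ is a $\otimes$-functor that factors through $\DMChow(k)_\bbQ$, and hence — via the $\bbQ$-linear equivalence $\Chow(k)_\bbQ \isoto \DMChow(k)_\bbQ$ appearing in diagram \eqref{eq:diagram-one} — through $\Chow(k)_\bbQ$. In particular $H^\ast$ sends direct sums to direct sums and tensor products to graded tensor products.

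Next I would compute the cohomology of the Lefschetz motive ${\bf L}$ and its tensor powers: by construction $H^\ast({\bf L})$ is the one-dimensional $K$-vector space concentrated in degree $2$, so that $H^\ast({\bf L}^{\otimes l})$ is one-dimensional and concentrated in degree $2l$ for every $l \geq 0$ (with $H^\ast({\bf L}^{\otimes 0}) = H^\ast(\Spec(k))$ concentrated in degree $0$).

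Applying Theorem~\ref{thm:main} then yields a canonical isomorphism of graded $K$-vector spaces
\[
H^\ast(\cX) \;\simeq\; \bigoplus_{i=1}^{m} H^\ast({\bf L}^{\otimes l_i}),
\]
with each summand one-dimensional and concentrated in degree $2l_i$. Since the integers $l_i \in \{0,\ldots,\dim(\cX)\}$ are non-negative, all nonzero contributions occur in even degrees, giving $H^n(\cX) = 0$ for $n$ odd. Moreover, for $n$ even,
\[
\dim_K H^n(\cX) \;=\; \#\{\,i : 2l_i = n\,\} \;\leq\; m,
\]
which is exactly the claim. No genuine obstacle arises here; the content of the statement is entirely carried by Theorem~\ref{thm:main}, and the corollary is merely its cohomological shadow.
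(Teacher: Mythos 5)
Your proof is correct and follows essentially the same route as the paper: factor $H^\ast$ through $\Chow(k)_\bbQ$, apply the decomposition from Theorem~\ref{thm:main}, and use that $H^\ast(\mathbf{L}^{\otimes l})$ is one-dimensional in degree $2l$. The only minor difference is that the paper derives the factorization $\overline{H^\ast}: \Chow(k)_\bbQ \to \mathrm{VecGr}_K$ from the universal property of Chow motives with rational coefficients (Andr\'e, Prop.~4.2.5.1) rather than treating it as part of the definition of a Weil cohomology; otherwise the arguments coincide.
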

Corollary~\ref{cor:main} can be used in order to prove negative results concerning the existence of a full exceptional collection. For instance, if there exists an odd number $n$ such that $H^n(\cX)\neq 0$, then the category $\cD^b(\cX)$ cannot admit a full exception collection. This is illustrated in Corollary~\ref{cor:cohomology}. Moreover, Corollary~\ref{cor:main} implies that a possible full exceptional collection on $\cD^b(\cX)$ has length always greater or equal to the maximum of the dimensions of the $K$-vector spaces $H^n(\cX)$, with $n$ even. 

\begin{remark}
After the circulation of this manuscript, Bernardara kindly informed us that an alternative proof of Theorem~\ref{thm:main} in the particular case where $\cX$ is a smooth projective {\em complex} variety can be found in \cite{Bernardara}; see also \cite{Bernardara1}. Moreover, Kuznetsov kindly explained us an alternative proof of Theorem~\ref{thm:main} following some of Orlov's ideas. Our approach is rather different and can also be viewed as a further step towards the complete understanding of the relationship between motives and noncommutative motives.
\end{remark}
Recall from \cite[\S10]{Duke}\cite[\S5]{survey} the construction of the {\em universal localizing invariant}
$$ \cU(-):\dgcat(k)\too \Mot(k)\,.$$
Roughly speaking, $\cU(-)$ is the {\em universal} functor defined on the category of dg categories and with values in a triangulated category that inverts Morita equivalences (see \S\ref{sub:Morita}), preserves filtered (homotopy) colimits, and sends short exact sequences (\ie sequences of dg categories which become exact after passage to the associated derived categories; see \cite[\S4.6]{ICM}) to distinguished triangles. Examples of localizing invariants include algebraic $K$-theory, Hochschild homology, cyclic homology (and all its variants),  and even topological cyclic homology. Because of this universal property, which is reminiscent from the theory of motives, $\Mot(k)$ is called the category of {\em noncommutative motives}. In order to simplify the exposition let us denote by ${\bf 1}$ the noncommutative motive $\cU(k)$. Our second main result, which provides a partial answer to Question B, is the following: (recall from \cite[\S1.4]{Huy} the notion of semi-orthogonal decomposition\footnote{A natural generalization of the notion of full exceptional collection.})
\begin{theorem}\label{thm:main2}
Let $\cX$ be a DM stack such that $M(\cX)_\bbQ \simeq {\bf L}^{\otimes l_1} \oplus \cdots \oplus {\bf L}^{\otimes l_m}$ (for some choice of integers $l_1, \ldots, l_m \in \{0, \ldots, \mathrm{dim}(\cX)\}$). Assume that $\cD^b(\cX)$ admits a semi-orthogonal decomposition $\langle \cC^1, \ldots, \cC^j,\ldots, \cC^p\rangle$ of length $p$ and let $\cC^j_\dg$ be the natural dg enhancement of $\cC_j$. Then, we have canonical isomorphisms
\begin{equation}\label{eq:canonical}
\cU(\cC^j_\dg)^\natural_\bbQ \simeq \underbrace{{\bf 1}_\bbQ \oplus \cdots \oplus {\bf 1}_\bbQ}_{n_j} \qquad 1\leq j\leq p
\end{equation}
in the category $\Mot(k)_\bbQ^\natural$ obtained from $\Mot(k)$ by first taking rational coefficients and then passing to the idempotent completion. Moreover, $\sum_{j=1}^p n_j=m$ and for every localizing invariant $L$ (with values in an idempotent complete $\bbQ$-linear triangulated category) the equality $L(\cC^j_\dg)=L(k)^{\oplus n_j}$ holds.
\end{theorem}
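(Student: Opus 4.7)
The strategy is to transport the decomposition of $M(\cX)_\bbQ$ to the noncommutative side via the bridge from commutative to noncommutative motives and then conclude by a purely categorical argument inside $\Hmo_0(k)_\bbQ$. Concretely, I would invoke the fully faithful symmetric monoidal embedding
$$\Chow(k)_\bbQ/(-\otimes\bbL)\hookrightarrow\Hmo_0(k)_\bbQ$$
(the Kontsevich--Tabuada bridge), which on smooth projective varieties sends $M(X)_\bbQ$ to $\cU_a(\perfdg(X))_\bbQ$ and, by the very construction of the orbit category on the left, identifies the Lefschetz motive $\bbL$ with the $\otimes$-unit ${\bf 1}_\bbQ$. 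Combined with the Toen equivalence $\DMChow(k)_\bbQ\simeq\Chow(k)_\bbQ$ recorded in~(\ref{eq:diagram-one}), this bridge extends to smooth proper DM stacks. The hypothesis on $M(\cX)_\bbQ$ therefore produces
$$\cU_a(\perfdg(\cX))_\bbQ\simeq{\bf 1}_\bbQ^{\oplus m}\,.$$

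Next, I would use the fact that the universal additive invariant $\cU_a$ turns semi-orthogonal decompositions into direct sums, which is essentially built in through the additivity axiom. Applied to $\cD^b(\cX)=\langle\cC^1,\ldots,\cC^p\rangle$ equipped with its natural dg enhancements, this yields
$$\cU_a(\perfdg(\cX))_\bbQ\simeq\bigoplus_{j=1}^p\cU_a(\cC^j_\dg)_\bbQ\,.$$
Comparing the two displays exhibits each $\cU_a(\cC^j_\dg)_\bbQ$ as a direct summand of ${\bf 1}_\bbQ^{\oplus m}$ in $\Hmo_0(k)_\bbQ$.

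To extract (\ref{eq:canonical}) from this I would appeal to idempotent completeness of $\Hmo_0(k)_\bbQ$ together with the identification $\End_{\Hmo_0(k)_\bbQ}({\bf 1}_\bbQ)=K_0(k)_\bbQ=\bbQ$. Under this identification $\End({\bf 1}_\bbQ^{\oplus m})=M_m(\bbQ)$, so every idempotent is conjugate to a diagonal projector and its image is forced to be isomorphic to ${\bf 1}_\bbQ^{\oplus n}$ for some $0\leq n\leq m$. Applied to the idempotent cutting out $\cU_a(\cC^j_\dg)_\bbQ$ this produces (\ref{eq:canonical}); the equality $\sum_j n_j=m$ follows by comparing ranks of these idempotents. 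The moreover clause is then immediate from the universal property of $\cU_a$: any additive invariant $E$ with values in a $\bbQ$-linear category factors uniquely through $\Hmo_0(k)_\bbQ$, so (\ref{eq:canonical}) forces $E(\cC^j_\dg)=E(k)^{\oplus n_j}$. The main obstacle I anticipate is the bridge step for DM stacks, namely verifying that the Kontsevich--Tabuada bridge interacts correctly with the Toen equivalence and that $\perfdg(\cX)$ is the correct dg category to compare with $M(\cX)_\bbQ$ on the noncommutative side; once this is set up, the remainder is essentially linear algebra in an idempotent-complete $\bbQ$-linear category.
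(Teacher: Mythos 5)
Your proof is correct and follows essentially the same route as the paper: transport the Chow-motivic decomposition through the orbit-category bridge to obtain $\cU_a(\cD^b_\dg(\cX))_\bbQ\simeq\mathbf{1}_\bbQ^{\oplus m}$, use additivity of $\cU_a$ on the semi-orthogonal decomposition to split this across the $\cC^j_\dg$, and then factor any $\bbQ$-linear additive invariant through $\Hmo_0(k)_\bbQ$. The one place you add something is the step from ``$\bigoplus_j\cU_a(\cC^j_\dg)_\bbQ\simeq\mathbf{1}_\bbQ^{\oplus m}$'' to ``each $\cU_a(\cC^j_\dg)_\bbQ\simeq\mathbf{1}_\bbQ^{\oplus n_j}$'': the paper asserts it ``as a consequence,'' while you correctly justify it via $\End_{\Hmo_0(k)_\bbQ}(\mathbf{1}_\bbQ)=K_0(k)_\bbQ=\bbQ$ and conjugacy of idempotents in $M_m(\bbQ)$ (note that since the $\cU_a(\cC^j_\dg)_\bbQ$ are already given as summands, idempotent-completeness of $\Hmo_0(k)_\bbQ$ is not actually needed, only that their projectors live in $M_m(\bbQ)$).
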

Note that Theorem~\ref{thm:main2} imposes strong conditions on the shape of a possible semi-orthogonal decomposition of $\cD^b(\cX)$, whenever $M(\cX)_\bbQ$ decomposes into a direct sum of tensor powers of the Lefschetz motive. Intuitively speaking, if a semi-orthogonal exists then each one of its pieces is rather simple from the noncommutative viewpoint.

The proofs of Theorems~\ref{thm:main} and \ref{thm:main2} rely on the ``bridge'' between Chow motives and noncommutative motives established by Kontsevich; see \cite[Thm.~1.1]{CvsNC}. In what concerns Theorem~\ref{thm:main}, we prove first that the dg enhancement $\cD_\dg^b(\cX)$ of $\cD^b(\cX)$ becomes isomorphic in the category of noncommutative motives to the direct sum of $m$ copies of the $\otimes$-unit. Then, making use of the mentioned ``bridge'' we show that all the possible lifts of this direct sum are the Chow motives of shape $\simeq {\bf L}^{\otimes l_1} \oplus \cdots \oplus {\bf L}^{\otimes l_m}$ with $l_1, \ldots, l_m \in \{0, \ldots, \mathrm{dim}(\cX)\}$. In what concerns Theorem~\ref{thm:main2}, the canonical isomorphisms \eqref{eq:canonical} follow from the mentioned ``bridge'' and from the decomposition of the noncommutative motive associated to $\cD^b_\dg(\cX)$; consult \S\ref{sub:proof} for details. 
%-------------------------------------------------------------------------------
\subsection*{Dubrovin conjecture}\label{sub:Dubrovin}
%------------------------------------------------------------------------------- 
At his ICM address \cite{Dub}, Dubrovin conjectured a striking connection between Gromov-Witten invariants and derived categories of coherent sheaves. The most recent formulation of this conjecture, due to Hertling-Manin-Teleman \cite{HeMaTe}, is the following:

\medbreak

{\bf Conjecture:} {\it Let $X$ be a smooth projective complex variety. (i) The quantum cohomology of $X$ is (generically) semi-simple if and only if $X$ is Hodge-Tate (\ie its Hodge numbers $h^{p,q}(X)$ are zero for $p\neq q$) and the bounded derived category $\cD^b(X)$ admits a full exceptional collection. (ii) The Stokes matrix of the structure connection of the quantum cohomology identifies with the Gram matrix of the exceptional collection.
}
\medbreak

Thanks to the work of Bayer, Golyshev, Guzzeti, Ueda, and others (see~\cite{Bay,Gol,Guz,Ueda}), items (i)-(ii) are nowadays known to be true in the case of projective spaces (and its blow-ups) and Grassmannians, while item (i) is also know to be true for minimal Fano threefolds. Moreover, Hertling-Manin-Teleman proved that the Hodge-Tate property follows from the semi-simplicity of quantum cohomology. Making use of Theorem~\ref{thm:main} we prove that the Hodge-Tate property follows also from the existence of a full exceptional collection.

\begin{proposition}\label{prop:main}

\begin{itemize}
\item[(i)] Let $X$ be a smooth projective complex variety. If the bounded derived category $\cD^b(X)$ admits a full exceptional collection then $X$ is Hodge-Tate. 
\item[(ii)] Let $X$ be a smooth projective variety defined over a field $k$ which is embedded $\alpha:k \hookrightarrow \bbC$ into the complex numbers. If the bounded derived category $\cD^b(X)$ admits a full exceptional collection then the complex variety $X_\alpha$ (obtained by base change along $\alpha$) is Hodge-Tate.
\end{itemize}
\end{proposition}
By item (i) of Proposition~\ref{prop:main} we conclude then that the Hodge-Tate property is unnecessary in the above conjecture, and hence can be removed. As the referee kindly explained us, Proposition~\ref{prop:main} admits the following {\'e}tale version:
\begin{proposition}\label{prop:new}
Let $X$ be a smooth projective variety defined over a field $k$. Assume that its bounded derived category $\cD^b(X)$ admits a full exceptional collection. Then, for every prime number $l$ different from the characteristic of $k$, the {\'e}tale cohomology $H^\ast_{et}(X_{\overline{k}},\bbQ_l)$ (see \cite[\S3.4.1]{Andre}) is a direct sum of tensor powers of the Lefschetz motive (considered as a $\bbQ_l$-module over the Galois group $\mathrm{Gal}(\overline{k}/k)$).
\end{proposition}
\medbreak\noindent\textbf{Acknowledgments:} The authors are very grateful to Roman Bezrukavnikov and Yuri Manin for stimulating discussions and precise references. They would like also to thank Marcello Bernardara,  Alexander Kuznetsov, John Alexander Cruz Morales and Kirill Zaynullin for detailed comments on a previous draft. They are also very grateful to the anonymous referee for all his/her corrections, suggestions, and comments that greatly helped the improvement of the article. M.~Marcolli was partially supported by the NSF grants {\tt DMS-0901221}, {\tt DMS-1007207}, {\tt DMS-1201512}, and {\tt PHY-1205440}. G.~Tabuada was partially supported by the NEC award {\tt 2742738} and by the Portuguese Foundation for Science and Technology through the grant {\tt PEst-OE/MAT/UI0297/2011} (CMA).
%-------------------------------------------------------------------------------
\section{Examples of full exceptional collections}\label{sec:examples}
%-------------------------------------------------------------------------------
In this section we summarize the state of the art on the existence of full exceptional collections.
%-------------------------------------------------------------------------------
\subsection*{Projective spaces}
%-------------------------------------------------------------------------------
A full exceptional collection $(\cO(-n), \ldots, \cO(0))$ of length $n+1$ on the bounded derived category $\cD^b(\bbP^n)$ of the $n^{\mathrm{th}}$ projective space was constructed by Beilinson in \cite{Beilinson}.
%-------------------------------------------------------------------------------
\subsection*{Quadrics}
%-------------------------------------------------------------------------------
In this family of examples we assume that $k$ is of characteristic zero. Let $(V,q)$ be a non-degenerate quadratic form of dimension $n \geq 3$ and $Q_q \subset \bbP(V)$ the associated smooth projective quadric of dimension $d=n-2$. In the case where $k$ is moreover algebraically closed, Kapranov \cite{Kapranov} constructed the following full exceptional collection on the derived category $\cD^b(Q_q)$:
\begin{eqnarray*}
(\Sigma(-d), \cO(-d+1), \ldots, \cO(-1), \cO) && \mathrm{if}\,\,d \,\,\mathrm{is} \,\,\mathrm{odd}\\
(\Sigma_+(-d), \Sigma_{-}(-d), \cO(-d+1), \ldots, \cO(-1), \cO) && \mathrm{if}\,\, d \,\,\mathrm{is}\,\, \mathrm{even}\,,
\end{eqnarray*}
where $\Sigma_{\pm}$ (and $\Sigma$) denote the spinor bundles. When $k$ is {\em not} algebraically closed, Kapranov's full exceptional collection was generalized by Kuznetsov \cite{Kuznetsov} into a semi-orthogonal decomposition
\begin{equation}\label{eq:semi-orthogonal}
\langle \cD^b(Cl_0(Q_q)), \cO(-d + 1), \ldots , \cO \rangle\,,
\end{equation}
where $Cl_0(Q_q)$ denotes the even part of the Clifford algebra associated to $Q_q$.

%-------------------------------------------------------------------------------
\subsection*{Toric varieties}
%-------------------------------------------------------------------------------
Let $X$ be a projective toric variety with at most quotient singularities and $B$ an invariant $\bbQ$-divisor whose coefficients belong to the set $\{\frac{r-1}{r}; r \in \bbZ_{>0}\}$. A full exceptional collection on the bounded derived category $\cD^b(\cX)$ of the stack $\cX$ associated to the pair $(X,B)$ was constructed by Kawamata in \cite{Kawamata}.
%-------------------------------------------------------------------------------
\subsection*{Homogeneous spaces}
%-------------------------------------------------------------------------------
In a recent work~\cite{KP}, Kuznetsov-Polishchuk conjectured the following important result:

\medbreak

{\bf Conjecture:} {\it Assume that the base field $k$ is algebraically closed and of characteristic zero. Then, for every semisimple algebraic group $G$ and parabolic subgroup $P \subset G$ the bounded derived category $\cD^b(G/P)$ admits a full exceptional collection.
}

\medbreak

As explained by Kuznetsov-Polishchuk in \cite[page~3]{KP}, this conjecture is known to be true in several cases. For instance, when $G$ is a simple algebraic group of type $A$, $B$, $C$, $D$, $E_6$, $F_4$ or $G_2$ and $P$ is a certain maximal parabolic subgroup, a full exceptional collection on $\cD^b(G/P)$ has been constructed. The case of an arbitrary maximal parabolic subgroup $P$ of a simply connected simple group $G$ of type $B$, $C$ or $D$ was also treated by Kuznetsov-Polishchuk in \cite[Thm.~1.2]{KP}. 

\subsection*{Fano threefolds}
In this family of examples we assume that $k$ is algebraically closed and of characteristic zero. Fano threefolds have been classified by Iskovskih and Mori-Mukai into $105$ different deformation classes; see \cite{Isk,Isk1,MoMu}. Making use of Orlov's results, Ciolli~\cite{Cio} constructed for each one of the $59$ Fano threefolds $X$ which have vanishing odd cohomology a full exceptional collection on $\cD^b(X)$ of length equal to the rank of the even cohomology. By combining these results with Corollary~\ref{cor:main} we obtain the following characterization:
\begin{corollary}\label{cor:cohomology}
Let $X$ be a Fano threefold. Then, the derived category $\cD^b(X)$ admits a full exceptional collection if and only if the odd cohomology of $X$~vanishes.
\end{corollary}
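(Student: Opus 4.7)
The plan is to combine the two inputs mentioned just above the statement: Corollary~\ref{cor:main} (which comes out of Theorem~\ref{thm:main}) and Ciolli's construction~\cite{Cio} based on Iskovskih--Mori--Mukai's classification and Orlov's techniques. Each supplies one direction of the biconditional, so the proof will be a short assembly rather than a new argument.

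For the forward implication, suppose $\cD^b(X)$ admits a full exceptional collection. Since $X$ is a smooth projective variety over the algebraically closed field $k$ of characteristic zero, $X \in \cD\cM(k)$ and the hypotheses of Corollary~\ref{cor:main} are met. Applied to any Weil cohomology $H^\ast(-)$ with coefficients in a characteristic zero field (for concreteness, singular cohomology after base change to $\bbC$, or $\ell$-adic \'etale cohomology), the corollary yields $H^n(X)=0$ for every odd $n$. Thus the odd cohomology of $X$ vanishes.

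For the converse, recall the Iskovskih--Mori--Mukai classification~\cite{Isk,Isk1,MoMu} partitioning Fano threefolds into $105$ deformation classes. Exactly $59$ of these classes consist of Fano threefolds with vanishing odd cohomology, and for each such class Ciolli~\cite{Cio}, building on Orlov's results, produced an explicit full exceptional collection on $\cD^b(X)$ whose length equals the rank of the even cohomology. Hence, for any Fano threefold $X$ with vanishing odd cohomology, $\cD^b(X)$ admits a full exceptional collection.

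Combining the two directions gives the stated equivalence. There is no real obstacle here: the content of the statement lies entirely in Corollary~\ref{cor:main} (which is nontrivial and rests on Theorem~\ref{thm:main} and hence on the bridge between commutative and noncommutative motives), together with the deep classification-plus-construction results of Iskovskih--Mori--Mukai and Ciolli--Orlov. The only thing to verify is that we are genuinely entitled to invoke Corollary~\ref{cor:main} in this setting, which we are, since a smooth projective complex variety is in particular a smooth proper DM stack.
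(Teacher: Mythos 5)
Your proof matches the paper's exactly: the paper also obtains the forward implication from Corollary~\ref{cor:main} and the converse from Ciolli's construction of full exceptional collections on the $59$ deformation classes of Fano threefolds with vanishing odd cohomology. No substantive difference.
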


\subsection*{Moduli spaces}

In a recent work, Manin and Smirnov \cite{MaSmi} constructed a full exceptional collection on the bounded derived category $\cD^b(\overline{\cM}_{0,n})$ of the moduli space of $n$-pointed stable curves of genus zero. This was done by an inductive blow-up procedure which combines Keel's presentation of $\overline{\cM}_{0,n}$ with Orlov's decomposition theorem. 

%-------------------------------------------------------------------------------
\section{Motivic decompositions}\label{sec:Mdecompositions}
%-------------------------------------------------------------------------------
Thanks to Theorem~\ref{thm:main} the Chow motive (with rational coefficients) of every one of the examples of \S\ref{sec:examples} decomposes into a direct sum of tensor powers of the Lefschetz motive. We would like to bring the attention of the reader to the fact that these motivic decompositions can in general be obtained using simple geometric arguments. Moreover, they have been established long before the corresponding full exceptional collections; consult for instance the work of Brosnan~\cite{Bro}, Chen-Gibney-Krashen~\cite{CheGiKra}, Chernousov-Gille-Merkurjev~\cite{CheGiMe}, Gille-Petrov-Semenov-Zainoulline~\cite{Zain}, Gorchinskiy-Guletskii~\cite{GoGu}, Karpenko~\cite{Karp}, K{\"o}ck \cite{Kock}, Manin~\cite{Manin}, Rost~\cite{Rost,Rost2}, and others.

In the case of quadrics (when $k$ is algebraically closed and of characteristic zero) the precise motivic decomposition is the following:
$$
M(Q_q)_\bbQ \simeq \left\{ \begin{array}{lcl}
M(\Spec(k))_\bbQ \oplus {\bf L} \oplus \cdots \oplus {\bf L}^{\otimes n}  &&  \text{if}  \,\,d \,\, \text{is} \,\,\text{odd} \\
M(\Spec(k))_\bbQ \oplus {\bf L} \oplus  \cdots \oplus {\bf L}^{\otimes n} \oplus {\bf L}^{\otimes(d/2)}  && \text{if}  \,\,d \,\, \text{is} \,\,\text{even} \,.
\end{array} \right.
$$
As explained by Brosnan in \cite[Remark~2.1]{Bro}, this motivic decomposition still holds over an arbitrary field $k$ of characteristic zero when $d$ is odd\footnote{The referee kindly explained us that the analogous result with $d$ even is false: a counterexample is given by the quadric in $\bbP^3$ over $\bbC(t)$ given the equation $xy+z^2=tw^2$.}. Hence, by combining the semi-orthogonal decomposition \eqref{eq:semi-orthogonal} of Kuznetsov with Theorem~\ref{thm:main2}, we conclude that the isomorphism  
\begin{equation}\label{eq:iso-new}
\cU(\cD^b_\dg(Cl_0(Q_q)))^\natural_\bbQ \simeq \cU(Cl_0(Q_q))^\natural_\bbQ \simeq {\bf 1}_\bbQ
\end{equation}
holds for every smooth projective quadric $Q_q$ of odd dimension.
\begin{remark}\label{rk:new}
The referee kindly explained us that \eqref{eq:iso-new} follows automatically from the fact that the even part of the Clifford algebra of every split odd dimensional quadric is a matrix algebra (and hence Morita equivalent to the base field).
\end{remark}

The case of Fano threefolds $X$ is quite interesting because the construction of the full exceptional collection precedes the precise description of $M(X)_\bbQ$, which was only recently obtained by Gorchinskiy-Guletskii in \cite[Thm.~5.1]{GoGu}. Concretely, we have the following isomorphism
$$ M(X)_\bbQ \simeq M(\Spec(k))_\bbQ \oplus M^1(X) \oplus {\bf L}^{\oplus b} \oplus (M^1(J)\otimes {\bf L}) \oplus ({\bf L}^{\otimes 2})^{\oplus b} \oplus M^5(X) \oplus {\bf L}^{\otimes 3}\,,$$
where $M^1(X)$ and $M^5(X)$ are the Picard and Albanese motives respectively, $b=b_2(X)=b_4(X)$ is the Betti number, and $J$ is a certain abelian variety defined over $k$, which is isogenous to the intermediate Jacobian  $J^2(X)$ if $k=\bbC$
\begin{remark}
Note that since the Lefschetz motive (and its tensor powers) have trivial odd cohomology we can conclude directly from Theorem~\ref{thm:main} and Corollary~\ref{cor:cohomology} that the derived category $\cD^b(X)$ of a Fano threefold $X$ admits a full exceptional collection if and only if $M(X)_\bbQ$ decomposes into a direct sum of tensor powers of the Lefschetz motive.
\end{remark}
%-------------------------------------------------------------------------------
\section{Preliminaries}
%-------------------------------------------------------------------------------
%-------------------------------------------------------------------------------
\subsection{Notations}\label{sub:notations}
%-------------------------------------------------------------------------------
Throughout the article we will reserve the letter $k$ for our perfect base field. The standard idempotent completion construction will be written as $(-)^\natural$.
%-------------------------------------------------------------------------------
\subsection{Dg categories}\label{sub:Morita}
%-------------------------------------------------------------------------------
A {\em differential graded (=dg) category}, over our base field $k$, is a category enriched over cochain complexes of $k$-vector spaces (morphisms sets are complexes) in such a way that composition fulfills the Leibniz rule $d(f \circ g)=d(f) \circ g + (-1)^{\mathrm{deg}(f)}f \circ d(g)$; consult Keller's ICM address~\cite{ICM}. The category of dg categories will be denoted by $\dgcat(k)$. Given a dg category $\cA$ we will write $\dgHo(\cA)$ for the associated $k$-linear category with the same objects as $\cA$ and morphisms given by $\dgHo(\cA)(x,y):=\dgHo\cA(x,y)$, where $\dgHo$ denotes the $0^{\mathrm{th}}$-cohomology. A dg category $\cA$ is called {\em pre-triangulated} if the associated category $\dgHo(\cA)$ is triangulated. Finally, a {\em Morita equivalence} is a dg functor $\cA \to \cB$ which induces an equivalence $\cD(\cA) \stackrel{\sim}{\to} \cD(\cB)$ on the associated derived categories; see \cite[\S4.6]{ICM}.
%-------------------------------------------------------------------------------
\subsection{Orbit categories}\label{sub:orbit}
%-------------------------------------------------------------------------------
Let $\cC$ be an additive symmetric monoidal category and $\cO \in \cC$ a $\otimes$-invertible object. As explained in \cite[\S7]{CvsNC}, we can then consider the {\em orbit category} $\cC\!/_{\!\!-\otimes \cO}$. It has the same objects as $\cC$ and morphisms given by
$$ \Hom_{\cC\!/_{\!\!-\otimes \cO}}(X,Y):= \bigoplus_{r \in \bbZ} \Hom_\cC(X,Y\otimes\cO^{\otimes r})\,.$$
The composition law is induced from $\cC$. Concretely, given objects $X, Y$ and $Z$ and morphisms
\begin{eqnarray*}
\underline{f}=\{f_r\}_{r\in \bbZ} \in \bigoplus_{r \in \bbZ} \Hom_{\cC}(X,Y\otimes \cO^{\otimes r}) && \underline{g}=\{g_s\}_{s\in \bbZ} \in \bigoplus_{s \in \bbZ} \Hom_{\cC}(Y,Z\otimes \cO^{\otimes s})\,,
\end{eqnarray*}
the $l^{\mathrm{th}}$-component of the composition $\underline{g} \circ \underline{f}$ is the finite sum
\begin{equation*}%\label{eq:composition}
\sum_r (g_{l-r} \otimes \cO^{\otimes r} )\circ f_r\,.
\end{equation*}
Under these definitions, we obtain an additive category $\cC\!/_{\!\!-\otimes \cO}$ and a canonical additive projection functor $\pi(-): \cC \to \cC\!/_{\!\!-\otimes \cO}$. Moreover, $\pi(-)$ is endowed with a natural $2$-isomorphism $\pi(-) \circ (-\otimes \cO) \stackrel{\sim}{\Rightarrow} \pi(-)$ and is $2$-universal among all such functors.
%-------------------------------------------------------------------------------
\subsection{$K_0$-motives}\label{sub:K0}
%-------------------------------------------------------------------------------
Recall from Gillet-Soul{\'e} \cite[\S5]{GiSou}\cite[\S5]{GiSou2} the construction\footnote{As explained in {\em loc. cit.}, the category $\KM(k)_\bbQ$ is originally due to Manin \cite{Manin}.} of the symmetric monoidal functor $\cP(k)^\op \to \KM(k)_\bbQ$ with values in the category of {\em $K_0$-motives} (with rational coefficients). Since $K_0$-motives are probably more familiar to the reader than noncommutative motives, we explain here the precise connection between the two.

Recall from Lunts-Orlov \cite[Thm.~2.13]{LO} that the triangulated category $\cD^b(X)$ of every smooth projective variety $X$ (or more generally of every smooth and proper Deligne-Mumford stack) admits a unique differential graded enhancement $\cD_\dg^b(X)$. In particular, we have an equivalence $\dgHo(\cD^b_\dg(X)) \simeq \cD^b(X)$ of triangulated categories. Since $X$ is regular every bounded complex of coherent sheaves is perfect (up to isomorphism) and so we have moreover a natural Morita equivalence $\cD_\dg^\perf(X) \simeq \cD_\dg^b(X)$. The assignment $X \mapsto \cD_\dg^\perf(X)$ gives then rise to a well-defined (contravariant) functor from $\cP(k)$ to $\dgcat(k)$. As explained in the proof of \cite[Thm.~1.1]{CvsNC}, there is a well-defined $\bbQ$-linear additive {\em fully faithful} symmetric monoidal functor $\theta_3$ making the following diagram commute
$$
\xymatrix{
\cP(k)^\op \ar[d] \ar[rr]^-{\cD^\perf_\dg(-)} && \dgcat(k) \ar[d]^-{\cU(-)^\natural_\bbQ} \\
\KM(k)_\bbQ \ar[rr]_-{\theta_3} && \Mot(k)_\bbQ^\natural\,.
}
$$
Intuitively speaking, the category of $K_0$-motives embeds fully faithfully into the category of noncommutative motives.
%-------------------------------------------------------------------------------
\section{Proof of Theorem~\ref{thm:main}}\label{sub:proof}
%-------------------------------------------------------------------------------
Let us denote by $\langle E_j\rangle, 1 \leq j \leq m$, the smallest triangulated subcategory of $\cD^b(\cX)$ generated by the object $E_j$. As explained in \cite[Example~1.60]{Huy} the full exceptional collection $(E_1, \ldots, E_j, \ldots, E_m)$ of length $m$ gives rise to the semi-orthogonal decomposition
\begin{equation*}%\label{eq:decomposition}
\cD^b(\cX)= \big{\langle} \langle E_1 \rangle, \ldots,  \langle E_j \rangle, \ldots, \langle E_m \rangle \big{\rangle}\,,
\end{equation*}
with $\langle E_j\rangle \simeq \cD^b(k)$ for $1 \leq j \leq m$. As explained in \S\ref{sub:K0}, the triangulated category $\cD^b(\cX)$ admits a (unique) differential graded (=dg) enhancement $\cD^b_\dg(\cX)$ such that $\dgHo(\cD^b_\dg(\cX)) \simeq \cD^b(\cX)$. Let us denote by $\langle E_j \rangle_\dg$ the dg enhancement of $\langle E_j \rangle$. Note that $\dgHo(\langle E_j\rangle_\dg) \simeq \langle E_j\rangle$ and that $\langle E_j\rangle_\dg \simeq \cD^b_\dg(k)$. Recall from \S\ref{sec:statements} that we have a well-defined universal localizing invariant
$$ \cU(-): \dgcat(k) \too \Mot(k)\,.$$
\begin{lemma}\label{lem:key}
The inclusions of dg categories $\cD_\dg^b(k) \simeq \langle E_j\rangle_\dg \hookrightarrow \cD^b_\dg(\cX), 1 \leq j \leq m$, give rise to an isomorphism
\begin{equation}\label{eq:iso-total}
\bigoplus_{j =1}^m \, \cU(\cD^b_\dg(k)) \stackrel{\sim}{\too} \cU(\cD^b_\dg(\cX))\,.
\end{equation}
\end{lemma}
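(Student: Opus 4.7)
The plan is to deduce the lemma from the additivity property of the universal additive invariant $\cU_a$, applied inductively to the semi-orthogonal decomposition $\cD^b(\cX)=\langle\langle E_1\rangle,\dots,\langle E_m\rangle\rangle$ induced by the full exceptional collection.

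First I would recall the relevant additivity statement: for any semi-orthogonal decomposition $\cT=\langle \cA,\cB\rangle$ of a pre-triangulated dg category $\cT$, passing to the natural dg enhancements yields a short exact sequence of dg categories
\[
\cA_\dg \hookrightarrow \cT_\dg \twoheadrightarrow \cB_\dg
\]
in the sense of Keller (see \cite{ICM,IMRN}), and moreover this sequence is \emph{split} because the semi-orthogonal decomposition provides both a left and a right adjoint to the inclusion $\cA_\dg\hookrightarrow\cT_\dg$ at the level of derived categories. By the very definition of $\cU_a$ as the universal functor inverting Morita equivalences and satisfying additivity for split exact sequences, one obtains a canonical isomorphism
\[
\cU_a(\cT_\dg)\;\simeq\;\cU_a(\cA_\dg)\oplus\cU_a(\cB_\dg)
\]
in $\Hmo_0(k)$, induced precisely by the inclusion of $\cA_\dg$ and the inclusion of $\cB_\dg$ (via the splitting).

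Next I would iterate this on the semi-orthogonal decomposition of $\cD^b(\cX)$. Writing $\cT^{(r)}:=\langle\langle E_1\rangle,\dots,\langle E_r\rangle\rangle$, one has $\cT^{(r)}=\langle\cT^{(r-1)},\langle E_r\rangle\rangle$, and by the previous paragraph
\[
\cU_a(\cT^{(r)}_\dg)\;\simeq\;\cU_a(\cT^{(r-1)}_\dg)\oplus\cU_a(\langle E_r\rangle_\dg).
\]
A straightforward induction on $r$ starting from $r=1$, where $\cT^{(1)}_\dg=\langle E_1\rangle_\dg$, then yields
\[
\cU_a(\cD^b_\dg(\cX))\;\simeq\;\bigoplus_{j=1}^m \cU_a(\langle E_j\rangle_\dg),
\]
with the isomorphism induced by the collection of inclusions $\langle E_j\rangle_\dg\hookrightarrow\cD^b_\dg(\cX)$. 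Finally, the equivalence of triangulated categories $\langle E_j\rangle\simeq\cD^b(k)$ combined with the uniqueness of the dg enhancement of $\cD^b(k)$ (Lunts--Orlov \cite{LO}) gives a derived Morita equivalence $\langle E_j\rangle_\dg\simeq\cD^b_\dg(k)$, so that $\cU_a(\langle E_j\rangle_\dg)\simeq\cU_a(\cD^b_\dg(k))$. Assembling these identifications produces the desired isomorphism \eqref{eq:iso-total}.

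The only non-formal step in this outline is the existence and splitness of the short exact sequence of dg enhancements attached to a semi-orthogonal decomposition; this is where the results of Lunts--Orlov on uniqueness of dg enhancements, together with the adjoint functors provided by a semi-orthogonal decomposition, do the real work. Once that is in hand, the remainder is a purely formal iteration of additivity of $\cU_a$ and will be the main technical point to spell out carefully.
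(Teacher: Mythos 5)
Your proposal is correct and follows essentially the same path as the paper: iterate the additivity of $\cU_a$ over the semi-orthogonal decomposition induced by the exceptional collection, and then identify each $\langle E_j\rangle_\dg$ with $\cD^b_\dg(k)$. The paper peels off $\langle E_i\rangle$ from the left end of $\langle E_i,\dots,E_m\rangle$ and invokes \cite[Thm.~6.3]{IMRN} directly (which is exactly the split-exactness/additivity statement you rederive from the adjoints coming from a semi-orthogonal decomposition), whereas you peel from the right; these are cosmetic differences.
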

\begin{proof}
For every $1 \leq i \leq m$, let $\langle E_i, \ldots, E_m\rangle$ be the full triangulated subcategory of $\cD^b(\cX)$ generated by the objects $E_i, \ldots, E_m$. Since by hypothesis $(E_1, \ldots, E_m)$ is a full exceptional collection of $\cD^b(\cX)$, we obtain the following semi-orthogonal decomposition
$$ \langle E_i , \ldots, E_m\rangle = \big{\langle} \langle E_i\rangle, \langle E_{i+1}, \ldots, E_m\rangle \big{\rangle}\,.$$

Now, let $\cA$, $\cB$ and $\cC$ be pre-triangulated dg categories (with $\cB$ and $\cC$ full dg subcategories of $\cA$) inducing a semi-orthogonal decomposition $\dgHo(\cA)=\big{\langle} \dgHo(\cB), \dgHo(\cC) \big{\rangle}$. As explained in \cite[\S12-13]{Duke}, we have then a {\em split} short exact sequence
\begin{equation}\label{eq:split}
\xymatrix{
0 \ar[r] & \cB \ar[r]_{\iota_\cB}  & \cA \ar[r] \ar@/_2ex/[l] & \cC \ar@/_2ex/[l]_-{\iota_\cC}  \ar[r] & 0\,,
}
\end{equation}
where $\iota_\cB$ (resp. $\iota_\cC$) denotes the inclusion of $\cB$ (resp. of $\cC$) on $\cA$. Consequently, \eqref{eq:split} is mapped by the universal localizing invariant $\cU(-)$ to a distinguished split triangle and so the inclusions $\iota_\cB$ and $\iota_\cC$ give rise to an isomorphism $\cU(\cB) \oplus \cU(\cC) \stackrel{\sim}{\to} \cU(\cA)$ in $\Mot(k)$. By applying this result to the dg enhancements
\begin{eqnarray*}
\cA:= \langle E_i, \cdots, E_m \rangle_\dg & \cB:= \langle E_i \rangle_\dg & \cC:=\langle E_{i+1}, \ldots, E_m\rangle_\dg
\end{eqnarray*}
we then obtain an isomorphism
\begin{equation}\label{eq:iso-key}
\cU(\cD^b_\dg(k)) \oplus \cU(\langle E_{i+1}, \ldots, E_m\rangle_\dg) \stackrel{\sim}{\too} \cU(\langle E_i, \ldots, E_m\rangle_\dg)
\end{equation}
for every $1 \leq i \leq m$. A recursive argument using \eqref{eq:iso-key} and the fact that $\cD^b_\dg(\cX)=\langle E_1, \ldots, E_m\rangle_\dg$ gives then rise to the above isomorphism \eqref{eq:iso-total}.
\end{proof}
Consider the following commutative diagram:
\begin{equation}\label{eq:diagram}
\xymatrix{
\cD\cM(k)^\op \ar@/^2pc/[rrr]^-{\cD^\perf_\dg(-)} \ar[dr]_{M(-)_\bbQ} & *+<2.5ex>{\cP(k)^\op}  \ar@{_{(}->}[l] \ar[rr]^{\cD^\perf_\dg(-)} \ar[d]^-{M(-)_\bbQ} && \dgcat(k)\ar[d]^{\cU(-)} \\
&\Chow(k)_\bbQ \ar[d]_{\pi(-)} && \Mot(k) \ar[d]^{(-)^\natural_\bbQ} \\
&\Chow(k)_\bbQ\!/_{\!\!-\otimes \bbQ(1)} \ar[rr]_-{R(-)} && \Mot(k)_\bbQ^\natural \,.
}
\end{equation}
Some explanations are in order. The lower-right rectangle is due to Kontsevich; see \cite[Thm.~1.1]{CvsNC}. The category $\Chow(k)_\bbQ\!/_{\!\!-\otimes \bbQ(1)}$ is the orbit category associated to the Tate motive $\bbQ(1)$ (which is the $\otimes$-inverse of the Lefschetz motive ${\bf L}$) and $R(-)$ is a $\bbQ$-linear additive fully-faithful functor; recall from \S\ref{sub:K0} since $\cX$ is regular we have a natural Morita equivalence $\cD^\perf_\dg(\cX) \simeq \cD^b_\dg(\cX)$. Finally, the upper-left triangle in the above diagram is the one associated to \eqref{eq:diagram-one}-\eqref{eq:func-final}.

The commutativity of the above diagram \eqref{eq:diagram}, the natural Morita equivalence $\cD^b_\dg(k)\simeq \cD^b_\dg(\Spec(k))$ of dg categories, and the fact that the functor $R(-)$ is additive and fully faithful, imply that the image of \eqref{eq:iso-total} under $(-)^\natural_\bbQ$ can be identified with the isomorphism
\begin{equation}\label{eq:sum}
\bigoplus_{j=1}^m \, \pi(M(\Spec(k))_\bbQ) \stackrel{\sim}{\too} \pi(M(\cX)_\bbQ)
\end{equation}
in the orbit category $\Chow(k)_\bbQ\!/_{\!\!-\otimes \bbQ(1)}$. Hence, since $M(\mathrm{Spec}(k))_\bbQ$ is the $\otimes$-unit of $\Chow(k)_\bbQ$ and the automorphism $-\otimes \bbQ(1)$ of $\Chow(k)_\bbQ$ is additive, there are morphisms
$$
 \underline{f} =\{f_r\}_{r \in \bbZ} \in  \bigoplus_{r \in \bbZ} \Hom_{\Chow(k)_\bbQ}(M(\cX)_\bbQ, \bigoplus_{j=1}^m \bbQ(1)^{\otimes r})$$
 and
 $$ \underline{g} =\{g_s\}_{s \in \bbZ}  \in  \bigoplus_{s \in \bbZ} \Hom_{\Chow(k)_\bbQ}(\bigoplus_{j=1}^m M(\Spec(k))_\bbQ ,M(\cX)_\bbQ \otimes \bbQ(1)^{\otimes s})$$
verifying the equalities $\underline{g} \circ \underline{f} = \id$ and $\underline{f} \circ \underline{g} = \id$. The equivalence of categories $\Chow(k)_\bbQ\simeq \DMChow(k)_\bbQ$, combined with the construction of the category of Deligne-Mumford-Chow motives (see \cite[\S8]{BM}), implies that
$$
\Hom_{\Chow(k)_\bbQ}(M(\cX)_\bbQ,\bigoplus_{j=1}^m \bbQ(1)^{\otimes r}) \simeq \bigoplus_{j=1}^m A^{\mathrm{dim}(\cX) +r}(\cX \times \Spec(k)) $$
and that
$$
\Hom_{\Chow(k)_\bbQ}(\bigoplus_{j=1}^m M(\Spec(k))_\bbQ,M(\cX)_\bbQ \otimes \bbQ(1)^{\otimes s})  \simeq  \bigoplus_{j=1}^m  A^s(\Spec(k)\times \cX)\,,$$
where $A^\ast(-)$ denotes the rational Chow ring of DM stacks defined by Vistoli in \cite{Vistoli}. Hence, we conclude that $f_r=0$ for $r\neq \{- \mathrm{dim}(\cX), \ldots, 0\}$ and that $g_s=0$ for $s\neq \{0, \ldots ,\mathrm{dim}(\cX)\}$. The sets of morphisms 
\begin{eqnarray*}
\{f_{-l}\,|\,  0 \leq l \leq \mathrm{dim}(\cX) \}&\mathrm{and}&\{g_l \otimes \bbQ(1)^{\otimes(-l)} \,|\, 0 \leq l \leq \mathrm{dim}(\cX)\}
\end{eqnarray*}
 give then rise to well-defined morphisms
\begin{eqnarray*}
\Phi: M(\cX)_\bbQ  \to  \bigoplus_{l=0}^{\mathrm{dim}(\cX)} \bigoplus_{j=1}^m \bbQ(1)^{\otimes (-l)} && \Psi: \bigoplus_{l=0}^{\mathrm{dim}(\cX)} \bigoplus_{j=1}^m \bbQ(1)^{\otimes(-l)}  \to  M(\cX)_\bbQ
\end{eqnarray*}
in $\Chow(k)_\bbQ$. The composition $\Psi \circ \Phi$ agrees with the $0^{\mathrm{th}}$-component of the composition $\underline{g} \circ \underline{f}=\id_{\pi(M(\cX)_\bbQ)}$, \ie it agrees with $\id_{M(\cX)_\bbQ}$. Since $\bbQ(1)^{\otimes (-l)}={\bf L}^{\otimes l}$ we conclude then that $M(\cX)_\bbQ$ is a direct summand of the Chow motive $\bigoplus_{l=0}^{\mathrm{dim}(\cX)} \bigoplus_{j=1}^{m}{\bf L}^{\otimes l}$. By definition of the Lefschetz motive we have the following equalities
$$ \Hom_{\Chow(k)_\bbQ}({\bf L}^{\otimes p}, {\bf L}^{\otimes q}) = \delta_{pq} \cdot \bbQ \qquad p,q\geq 0\,,$$
where $\delta_{pq}$ stands for the Kronecker symbol. As a consequence, $M(\cX)_\bbQ$ is in fact isomorphic to a subsume of $\bigoplus_{l=0}^{\mathrm{dim}(X)} \bigoplus_{j=1}^{m}{\bf L}^{\otimes l}$ indexed by a certain subset $S$ of $\{0, \ldots,\mathrm{dim}(X)\} \times \{1, \ldots, m\}$. By construction of the orbit category we have natural isomorphisms
$$ \pi({\bf L}^{\otimes l}) \stackrel{\sim}{\too} \pi(M(\Spec(k))_\bbQ) \qquad l \geq 0\,.$$
Hence, since the direct sum in the left-hand-side of \eqref{eq:sum} contains $m$ terms we conclude that the cardinality of $S$ is also $m$. This means that there is a choice of integers (up to permutation) $l_1, \ldots, l_m \in \{0, \ldots, \mathrm{dim}(\cX)\}$ giving rise to a canonical isomorphism
$$ M(\cX)_\bbQ \simeq {\bf L}^{\otimes l_1} \oplus \cdots \oplus {\bf L}^{\otimes l_m}$$
in $\Chow(k)_\bbQ$. The proof is then achieved.
%-------------------------------------------------------------------------------
\section{Proof of Corollary~\ref{cor:main}}
%-------------------------------------------------------------------------------
Since by hypothesis $K$ is a field of characteristic zero, the universal property of the category $\Chow(k)_\bbQ$ of Chow motives with rational coefficients (see \cite[Prop.~4.2.5.1]{Andre} with $F=\bbQ$) furnish us a (unique) additive symmetric monoidal functor $\overline{H^\ast}(-)$ making the right-hand-side triangle of the following diagram
$$
\xymatrix{
 \cD\cM(k)^\op \ar[dr]_-{M(-)_\bbQ}  & *+<2.5ex>{\cP(k)^\op}  \ar@{_{(}->}[l] \ar[d] \ar[r]^{H^\ast(-)} & \mathrm{VecGr}_K \\
& \Chow(k)_\bbQ \ar[ur]_{\overline{H^\ast}(-)} &
}
$$
commutative. Note that the commutativity of the left-hand-side triangle holds by construction. By hypothesis $\cD^b(\cX)$ admits a full exceptional collection of length $m$ and so by Theorem~\ref{thm:main} there is a choice of integers (up to permutation) $l_1, \ldots, l_m \in \{0, \ldots, \mathrm{dim}(\cX)\}$ giving rise to a canonical isomorphism
$$ M(\cX)_\bbQ \simeq {\bf L}^{\otimes l_1} \oplus \cdots \oplus {\bf L}^{\otimes l_m}\,.$$
Since the functor $\overline{H^\ast}(-)$ is additive and symmetric monoidal and $H^\ast(\cX)=\overline{H^\ast}(M(\cX)_\bbQ)$ we obtain then the following identification
$$ H^\ast(\cX) \simeq \overline{H^\ast}({\bf L})^{\otimes l_1} \oplus \cdots \oplus \overline{H^\ast}({\bf L})^{\otimes l_m}\,.$$
As proved in \cite[Prop.~4.2.5.1]{Andre} we have 
\begin{equation*}
\overline{H^n}({\bf L})\simeq \left\{ \begin{array}{cc} K& n=2 \\
 0 & n \neq 2 \end{array} \right.
\end{equation*}
and so we conclude that $H^n(\cX)=0$ for $n$ odd and that $\mathrm{dim}H^n(\cX)\leq m$ for $n$ even.
%-------------------------------------------------------------------------------
\section{Proof of Theorem~\ref{thm:main2}}
%-------------------------------------------------------------------------------
As the proof of Lemma~\ref{lem:key} shows, we can replace $\langle E_j\rangle_\dg$ by the dg category $\cC_\dg^j \subset \cD_\dg^b(\cX)$ and hence obtain the isomorphism $\bigoplus_{j=1}^p \cU(\cC^j_\dg) \simeq \cU(\cD^b_\dg(\cX))$ in $\Mot(k)$. Since by hypothesis $M(\cX)_\bbQ \simeq {\bf L}^{\otimes l_1} \oplus \cdots \oplus {\bf L}^{\otimes l_m}$ one concludes then from the above commutative diagram \eqref{eq:diagram} and from the fact that $R(-)$ is fully-faithful that $\bigoplus^p_{j=1} \,\cU(\cC^j_\dg)^\natural_\bbQ$ is isomorphic to the direct sum ${\bf 1}_\bbQ \oplus \cdots \oplus {\bf 1}_\bbQ$ (with $m$-terms). As a consequence, for every $1\leq j\leq p$, there exists a well-defined non-negative integer $n_j$ such that
\begin{eqnarray}\label{eq:isom-j}
\cU(\cC^j_\dg)^\natural_\bbQ \simeq  \underbrace{{\bf 1}_\bbQ \oplus \cdots \oplus {\bf 1}_\bbQ}_{n_j} &\mathrm{and}& \sum^p_{j=1}n_j = m\,.
\end{eqnarray}
Now, let $L: \dgcat(k) \to \cT$ be a localizing invariant. Since by hypothesis $\cT$ is an idempotent complete $\bbQ$-linear triangulated category, we obtain from the universal property of $\cU(-)$ (see \cite[Thm.~10.5]{Duke}) an induced $\bbQ$-linear triangulated functor
$$ \overline{L}: \Mot(k)^\natural_\bbQ \too \cT$$
such that $\overline{L}(\cU(\cA)^\natural_\bbQ)= L(\cA)$ for every dg category $\cA$. By the above isomorphism \eqref{eq:isom-j} we conclude then that $L(\cC^j_\dg)\simeq L(k)^{\oplus n_j} $ as claimed. This concludes the proof.

%-------------------------------------------------------------------------------
\section{Proof of Proposition~\ref{prop:main}}
%-------------------------------------------------------------------------------
Let us start by proving item (i). Since by hypothesis $\cD^b(X)$ admits a full exceptional collection, there is by Theorem~\ref{thm:main} a choice of integers (up to permutation) $l_1, \ldots, l_m \in \{0, \ldots, \mathrm{dim}(X)\}$ giving rise to a canonical isomorphism
\begin{equation}\label{eq:mot-decomp}
M(X)_\bbQ\simeq {\bf L}^{\otimes l_1} \oplus \cdots \oplus {\bf L}^{\otimes l_m} \in \Chow(\bbC)_\bbQ\,.
\end{equation}
We now proceed as in the proof of Corollary~\ref{cor:main}. Let us denote by ${\mathcal R}_H$ the realization functor from $\Chow(\bbC)_\bbQ$ to $\bbQ$-Hodge structures; see \cite[\S V.2.3]{Levine}. Thanks to \eqref{eq:mot-decomp} we obtain the isomorphism $ {\mathcal R}_H(M(X)_\bbQ ) \simeq {\mathcal R}_H({\bf L})^{\otimes l_1} \oplus \cdots \oplus {\mathcal R}_H ({\bf L})^{\otimes l_m}$. Since 
\begin{equation*}
h^{p,q}({\mathcal R}_H({\bf L})) = \left\{ \begin{array}{cc} 1& p=q \\
 0 & p \neq q \end{array} \right.
\end{equation*}
we conclude then that the Hodge numbers $h^{p,q}(X)$ are zero for  $p\neq q$, \ie that $X$ is
Hodge-Tate. This concludes the proof of item (i).

Let us now prove item (ii). As in the proof of item (i), there is a choice of integers (up to permutation) $l_1, \ldots, l_m \in \{0, \ldots, \mathrm{dim}(X)\}$ giving rise to an isomorphism
\begin{equation}\label{eq:mot-decomp1}
M(X)_\bbQ\simeq {\bf L}^{\otimes l_1} \oplus \cdots \oplus {\bf L}^{\otimes l_m} \in \Chow(k)_\bbQ\,.
\end{equation}
The field embedding $\alpha:k \hookrightarrow \bbC$ gives rise to to a $\bbQ$-linear additive  symmetric monoidal base change functor
\begin{eqnarray*}
\Chow(k)_\bbQ \too \Chow(\bbC)_\bbQ && M(X)_\bbQ \mapsto M(X_\alpha)_\bbQ
\end{eqnarray*}
which preserves the Lefschetz motive; see \cite[\S4.2.3]{Andre}. Consequently, using \eqref{eq:mot-decomp1} we obtain the canonical isomorphism $M(X)_\bbQ\simeq {\bf L}^{\otimes l_1} \oplus \cdots \oplus {\bf L}^{\otimes l_m}$ in $\Chow(\bbC)_\bbQ$. Now, using the arguments of item (i), we conclude that $X_\alpha$ is Hodge-Tate and so the proof is finished.

%-------------------------------------------------------------------------------
\section{Proof of Proposition~\ref{prop:new}}
%-------------------------------------------------------------------------------
The proof is similar to the one of Proposition~\ref{prop:main}; simply use the realization functor to {\'e}tale cohomology instead of the realization functor to $\bbQ$-Hodge structure.

\end{document}